\long\global\def\C#1\F{{}}
\newtheorem{theorem}{Theorem}
\newtheorem{lemma}[theorem]{Lemma}
\newtheorem{definition}[theorem]{Definition}
\newtheorem{remark}[theorem]{Remark}
\newtheorem{example}[theorem]{Example}
\newtheorem{gauss graph}[theorem]{The Graph of Gauss's Puzzle}
\newtheorem{arrang hyper}[theorem]{An Arrangement of Hyperplanes}
\let\slantedreturnpytha\returnpytha
\def\returnpytha{\slantedreturnpytha\rm}
\let\slantedorderinvariance\orderinvariance
\def\orderinvariance{\slantedorderinvariance\rm}
\let\slantedopenproblem\openproblem
\def\openproblem{\slantedopenproblem\rm}
\newtheorem{projectiontheorem}[theorem]{Projection Theorem}
\renewenvironment{proof}{{\sc Proof.}}{\EOP\wl}
\def\vtl{\vskip 1mm}
\def\tl{\vskip 2mm}
\def\wl{\vskip 4mm}
\def\es{\varnothing}
\def\][{\hspace{.03cm}]\hspace{-.1cm}[}%%%interval, (half) open or closed
\def\SB{\subseteq}
\def\Re+{\mathbb R_+}
\def\Re++{\mathbb R_{++}}
\def\EQ{\Longleftrightarrow}
\def\EOP{\phantom{a}\hfill $\square$}
\def\bitup{\bigtriangleup}
\def\FFF{{\cal F}}
\def\GGG{{\cal G}}
\def\HHH{{\cal H}}
\def\KKK{{\cal K}}
\def\LLL{{\cal L}}
\def\begeq{\begin{equation}}
\def\edeq{\end{equation}}
\def\st{{\;\vrule height8pt width0.7pt depth2.5pt\;}}
\def\sqr#1#2{{\vcenter{\vbox{\hrule height.#2pt
          \hbox{\vrule width.#2pt height#1pt \kern#1pt
              \vrule width.#2pt}
           \hrule height.#2pt}}}}
\def\square{\mathchoice\sqr56\sqr56\sqr{2.1}3\sqr{1.5}3}
\def\roster{\begin{enumerate}}
\def\endroster{\end{enumerate}}
\def\fp{\noindent}
\let\slantedexample\example
\def\example{\slantedexample\rm}
\let\slantedexamples\examples
\def\examples{\slantedexamples\rm}
\let\slantedremark\remark
\def\remark{\slantedremark\rm}
\let\slantedremarks\remarks
\def\remarks{\slantedremarks\rm}
\let\slanteddefinition\definition
\def\definition{\slanteddefinition\rm}
\let\slantednote\note
\def\note{\slantednote\rm}
\let\slantednotes\notes
\def\notes{\slantednotes\rm}
\long\def\@makecaption#1#2{%
  \vskip\abovecaptionskip
  \sbox\@tempboxa{\small #1: \sc #2}%
  \ifdim \wd\@tempboxa >\hsize
    \small #1: \sc #2\par
  \else
    \global \@minipagefalse
    \hb@xt@\hsize{\hfil\box\@tempboxa\hfil}%
  \fi
  \vskip\belowcaptionskip}
\begin{document}
\pagestyle{plain}

\setcounter{page}{1}

\title{
\vspace*{-1.2cm}
\Large Projections of a Learning Space\thanks{I am grateful to Eric Cosyn, Chris Doble, Mathieu Koppen, Hasan Uzun, and David Lemoine for their useful reactions to a previous draft of this paper.}}
\author{\normalsize Jean-Claude Falmagne\thanks{Dept.~of Cognitive Sciences,
University of California, Irvine, CA 92697. 
e-mail: jcf@uci.edu.
}\\
University of California, Irvine
}

\date{}
\maketitle

\thispagestyle{empty}

\vspace*{-1cm}
\begin{abstract}
\fp
Any proper subset $Q'$ of the domain $Q$ of a learning space $\KKK$ defines a projection of $\KKK$ on $Q'$ which is itself a learning space consistent with $\KKK$. Such a construction defines a partition of $Q$ having each of its classes either equal to $\{\es\}$, or preserving some key properties of the learning space $\KKK$, namely closure under union and  wellgradedness. If the set $Q'$ satisfies certain conditions, then each of the equivalence classes is essentially, via a trivial transformation, a learning space.
We give a direct proof of these and related facts which are instrumental in parsing large learning spaces. 
\end{abstract}
\def\KQ'{\KKK_{|Q'}}
\def\bigtrup{\bigtriangleup}
\def\Kk{\KKK_{[K]}}
\def\Kkp{\KKK_{[K]}^+}
\begin{comment}
{\small
\begin{roster}
\item[]
\begin{roster}
\item[Keywords:] knowledge spaces, artificial intelligence, antimatroids, media theory, psychometrics 
\end{roster}
\end{roster}
}
\end{comment}
\tl
\fp
This paper is dedicated to George Sperling whose curious, incisive mind rarely fails to produce the unexpected creative idea. George and I have been colleagues for the longest time in both of our careers.  The benefit has been mine.

Learning spaces, which are special cases of knowledge spaces \citep[cf.][]{doign99}, are mathematical structures designed to model the cognitive organization of a scholarly topic, such as Beginning Algebra or Chemistry 101. The definition of `learning space' is recalled in our Definition \ref{ls def}. Essentially, a learning space is a family of sets, called knowledge states, satisfying a couple of conditions. The elements of the sets are `atoms' of knowledge, such as facts or problems to be solved.  A knowledge state is a set gathering some of these atoms. Each of the knowledge states in a learning space is intended as a possible representation of some individual's competence in the topic. Embedded in a suitable stochastic framework, the concept of a learning space provides a mechanism for the assessment of knowledge in the sense that efficient questioning of a subject on a well chosen subset of atoms leads to gauge his or her knowledge state\footnote{As such, it offers an alternative to standardized testing, the theoretical basis of which is fundamentally different and based on the measurement of aptitudes \citep[see][for example]{nunna94}.}. Many aspects of these structures have been investigated and the results were reported in various publications; for a sample, see \citet{doignon:85,falmagne:88a, albert:b99, falma06}. The monograph by \citet{doign99}
contains most of the results up to that date\footnote{An extensive database on knowledge spaces, with hundreds of titles, is maintained by Cord Hockemeyer at the University of Graz: 
http://wundt.uni-graz.at/kst.php \citep[see also][]{hockemeyer:tr01a}.}.

In practice, in an educational context for example, a learning space can be quite large, sometimes numbering millions of states. The concept of a `projection' at the core of this paper provides a way of parsing such a large structure into meaningful components. 
Moreover, when the learning space concerns a scholarly curriculum such as  high school algebra, a projection may provide a convenient instrument for the programming of a placement test. For the complete algebra curriculum comprising several hundred types of problems, a placement test of a few dozens problems can be manufactured automatically via a well chosen projection.

The key idea is that if $\KKK$ is a learning space on a domain $Q$, then any subset $Q'$ of $Q$ defines a learning space $\KQ'$ on $Q'$ which is consistent with $\KKK$.  We call $\KQ'$ a `projection' of $\KKK$ on $Q'$, a terminology consistent with that used by \citet{cavag08} and \citet{eppst07b} for media. Moreover, this construction defines a partition of $\KKK$ such that each equivalence class is a subfamily of $\KKK$ satisfying some of the key properties of a learning space. In fact,  $Q'$ can be chosen so that each of these equivalence classes is essentially (via a trivial transformation) either a learning space consistent with $\KKK$ or the singleton $\{\es\}$.

These results, entitled `Projection Theorems' (\ref{projectiontheorem1} and \ref{projectiontheorem2}),  are formulated in this paper. They could be derived from corresponding results for the projections of media \citep[][]{cavag08, falma02, eppst07b}. Direct proof are given here. This  paper extends previous results from \citet[][Theorem 1.16 and Definition 1.17]{doign99} and \citet{cosyn:02}.

\section*{Basic Concepts}
\begin{definition}\label{ls def}  We denote by $K \bitup L = (K\setminus L) \cup (L\setminus K)$ the symmetric difference between two sets $K$, $L$, and by $d(K,L) = |K\bitup L|$ the symmetric difference distance between these sets. (All the sets considered in this chapter are finite.) The symbols ``$+$ and $\subset$ stand for the disjoint union  and the proper inclusion of sets  respectively. A {\sl (knowledge) structure} is a pair
$(Q,\KKK)$ where $Q$ is a non empty set and $\KKK$ is a family of subsets of $Q$ containing  $\es$ and $Q=\cup\KKK$. The latter is called the  {\sl domain} of $(Q,\KKK)$. The elements of $Q$ are called \emph{items} and the sets in $\KKK$ are  {\sl (knowledge) states}. Since $Q =\cup \KKK$,  the set $Q$ is implicitly defined  by $\KKK$ and we can without ambiguity call $\KKK$ a knowledge structure. A knowledge structure $\KKK$  is {\sl well--graded} if for any two distinct states $K$ and $L$ with $|K\bigtriangleup L| = n$ there exists a sequence
$
K_0=K,K_1,\ldots, K_n= L
$ 
such that $d(K,L) = n$ and $d(K_i, K_{i+1})=1$ for $0\leq i\leq n-1$. 
We call such a sequence $K_0=K,K_1,\ldots, K_n= L$ a {\sl tight path} from $K$ to~$L$. 
We say that $\KKK$ is a {\sl knowledge space} if it is closed under union, or \emph{$\cup$-closed}. 

A knowledge structure $(Q,\KKK)$ is a {\sl learning space} \citep[cf.][]{cosyn05} if it satisfies the following two conditions: 
\vspace{-.2cm}
 \roster
\item[{[L1]}] {\sc Learning smoothness.} For any two $K,L\in\KKK$, with $K \subset L$ and $|L\setminus K| = n$, there is a chain 
$
K_0=K \subset  K_1\subset\cdots\subset K_n = L
$  
such that, for $0\leq i\leq n-1$, we have $K_{i+1} = K_{i} + \{q_i\}\in\KKK$ for some $q_i\in Q$ . 
\vspace{-.2cm}
\item[{[L2]}] {\sc Learning consistency.} If $K \subset L$ are two sets in $\KKK$ such that $K + \{q\} \in \KKK$ for some $q\in Q$, then  $L \cup \{q\} \in \KKK$.  
\endroster
  A learning space is also known in the combinatorics literature as an `antimatroid', a structure introduced independently by \citet{edelm85} with slightly different, but equivalent axioms \citep[cf.~also][]{welsh95, bjorn99}. Another name is `well--graded knowledge space' \citep{falmagne:88b}; see our Lemma~\ref{echan}. 

A family  $\FFF$ of subsets of a set $Q$ is a \emph{partial knowledge structure} if it contains the set $Q=\cup\FFF$. We do not assume that $|\FFF|\geq 2$.  
We also call `states' the sets in $\FFF$.  A partial knowledge structure  $\FFF$ is a \emph{partial learning space}  if it satisfies Axioms [L1] and [L2]. Note that  $\{\es\}$ is vacuously well-graded and vacuously  $\cup$-closed, with $\cup\{\es\}=\es$. Thus, it  a partial knowledge structure  and a partial learning space (cf.~Lemma \ref{partialechan}).
\end{definition}
The following preparatory result will be helpful in shortening some proofs.
\begin{lemma}\label{toprovewg} A $\cup$-closed family of set $\KKK$ is well--graded if, for any two sets $K\subset L$, there is   a tight path from $K$ to $L$.
\end{lemma}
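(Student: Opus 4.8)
The plan is to reduce the general case of two (possibly incomparable) states to the comparable case supplied by the hypothesis, using $\cup$-closure to manufacture a common upper bound. Let $K, L \in \KKK$ be distinct, and put $M = K \cup L$. Since $\KKK$ is $\cup$-closed, $M \in \KKK$, and plainly $K \SB M$ and $L \SB M$. The arithmetic that makes everything fit is that $M \setminus K = L \setminus K$ and $M \setminus L = K \setminus L$; these two sets are disjoint and their union is $K \bitup L$, so that $|M \setminus K| + |M \setminus L| = d(K, L)$.

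First I would apply the hypothesis to the comparable pair $K \SB M$ (this inclusion is proper unless $K = M$, in which case I take the trivial one-term path) to get a tight path $K = A_0, A_1, \ldots, A_a = M$, with $a = |M \setminus K| = |L \setminus K|$ and $d(A_i, A_{i+1}) = 1$ at every step. Symmetrically, applying the hypothesis to $L \SB M$ gives a tight path $L = B_0, B_1, \ldots, B_b = M$ with $b = |M \setminus L| = |K \setminus L|$. Reversing the second path turns it into a tight path $M = B_b, B_{b-1}, \ldots, B_0 = L$ from $M$ down to $L$, its consecutive distances still being $1$.

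Concatenating the ascent from $K$ to $M$ with this descent from $M$ to $L$ and relabeling produces a single sequence $K = N_0, N_1, \ldots, N_{a+b} = L$, all of whose terms lie in $\KKK$ (the $A_i$ and $B_j$ are states and $M \in \KKK$) and which satisfies $d(N_i, N_{i+1}) = 1$ throughout. It then remains only to check that this is a tight path, i.e.\ that its length equals the endpoint distance: indeed $a + b = |L \setminus K| + |K \setminus L| = |K \bitup L| = d(K, L)$. Since $K, L$ were arbitrary distinct states, $\KKK$ is well--graded.

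I do not expect a serious obstacle here; the one point to get right --- and the reason $\cup$-closure is actually needed --- is that the ascent and descent join at $M = K \cup L$ with no wasted moves. Because $M \setminus K$ and $M \setminus L$ partition $K \bitup L$, no element inserted while climbing from $K$ to $M$ is ever deleted while descending to $L$, so the step counts add to exactly $d(K, L)$ rather than overshooting it; this is precisely what certifies the concatenated sequence as a tight path.
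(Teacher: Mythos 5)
Your proof is correct and follows essentially the same route as the paper's: join $K$ and $L$ through their union $K\cup L$ (which lies in $\KKK$ by $\cup$-closure), take the two tight paths given by the hypothesis, reverse one, concatenate, and verify that the total length $|L\setminus K|+|K\setminus L|$ equals $d(K,L)$. Your write-up is in fact slightly more careful than the paper's, since you make the length-counting and the degenerate case $K\cup L=K$ explicit.
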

\begin{proof} Suppose that the condition holds. For any two distinct sets $K$ and $L$, there exists a tight path $K_0=K\subset K_1\subset  \ldots \subset K_n= K\cup L$ and another tight path $L_0=L\subset L_1\subset\ldots\subset  L_m= K\cup L$. These two tight paths can be concatenated. Reversing the order of the sets in the latter tight path and redefining 
$K_{n+1} = L_{m-1}, K_{n+2} = L_{m-2},\ldots, K_{n+m} = L_0=L$ we get the tight  path $K_0=K, K_1,\ldots, K_{n+m} = L_0 = L$, with $|K\bigtriangleup L| = n + m$.
\end{proof}

\section*{Projections}
As mentioned in our introduction, some knowledge structures may be so large that a splitting is required, for convenient storage in a computer's memory for example. Also, in some situations, only a representative part of a large knowledge structure may be needed. The concept of a projection is of critical importance in this respect. We introduce a tool for its construction.

\begin{definition}\label{def sim}   
Let $(Q,\KKK)$ be a partial knowledge structure with $|Q|\geq 2$  and let $Q'$ be any proper non empty subset of~$Q$. 
Define a relation $\sim_{Q'}$ on $\KKK$ by
\begin{align}\label{def sim eq}
K \sim_{Q'} L \quad&\EQ\quad K\cap Q' =  L \cap Q'\\
\label{def sim eq 2}
&\EQ\quad K\bitup L \SB Q\setminus Q'.
\end{align}
Thus, $\sim_{Q'}$ is an equivalence relation on $\KKK$.  When the context specifies the subset $Q'$,  we sometimes use the shorthand $\sim$ for $\sim_{Q'}$ in the sequel. The equivalence between the right hand sides of (\ref{def sim eq}) and (\ref{def sim eq 2}) is easily verified. We denote by $[K]$ the equivalence class of $\sim$ containing $K$, and by $\KKK_\sim=\{[K]\st K\in\KKK\}$ the partition of $\KKK$ induced by $\sim$. 
We may say for short that such a partition is induced by the set $Q'$. In the sequel we always assume that $|Q|\geq 2$, so that $|Q'|\geq 1$.
\end{definition}

\begin{definition}\label{def projection} Let $(Q, \KKK)$ be a partial knowledge structure  and take any non empty proper subset $Q'$ of $Q$.  The family
\begin{equation}\label{eq KQ'}
\KQ'= \{W\SB Q\st W = K\cap Q'\text{ for some }K\in\KKK\}
\end{equation}
is called the {\sl projection} of $\KKK$ on $Q'$.  We have thus
$
\KQ'\SB 2^{Q'}.
$ 
As shown by Example \ref{examp projection}, the sets in $\KQ'$ may not be states of $\KKK$.  
For any
state $K$ in $\KKK$ and with $[K]$ as in Definition~\ref{def sim}, we define  the family
\begin{equation}
\label{def Kk}
\KKK_{[K]}= \{M\st  M= L\setminus \cap[K]\text{ for some }L\sim K\}.
\end{equation}
(If $\es\in\KKK$, we have thus $\KKK_{[\es]}=[\es]$.) 
The family $\Kk$ is called a \emph{$Q'$-child}, or simply a {\sl child} of $\KKK$ (\emph{induced by $Q'$}). As shown by the example below, a child of $\KKK$ may take the form of the singleton $\{\es\}$ and we may have $\KKK_{[K]} = \KKK_{[L]}$ even when $K\not\sim L$. The set $\{\es\}$ is called the \emph{trivial} child.
\end{definition}

\begin{example}\label{examp projection} Equation (\ref{lin_space_ex}) defines a learning space $\FFF$ on the domain $Q=\{a,b,c,d,e,f\}$:
\begin{gather}\nonumber
\FFF=\{\es,\{b\},\{c\}, \{a,b\},\{a,c\},\{b,c\},\{b,d\}, \{a,b,c\},\{a,b,d\},\{b,c,d\},\{b,c,e\},\{b,d,f\}, \\
\nonumber
\{a,b,c,d\},\{a,b,c,e\},\{b,c,d,e\},\{b,c,d,f\},\{b,c,e,f\},\{a,b,d,f\}, \{a,b,c,d,e\},\\
\label{lin_space_ex} 
\hspace{-.3cm}
\{a,b,c,d,f\}, \{a,b,c,e,f\},\{b,c,d,e,f\},\{a,b,c,d,e,f\}, \{a,b,c,d,e,f,g\}\}.
\end{gather}

\begin{figure}[h]
\vspace{-4cm}
\hspace{.5cm}
\includegraphics[scale= .67]{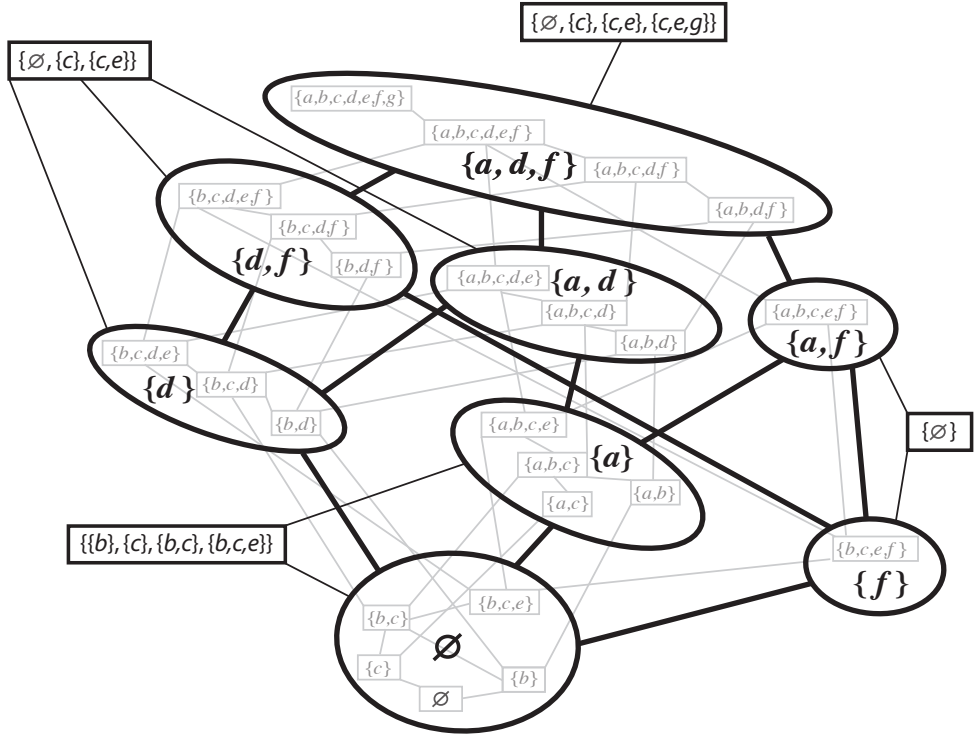}
\vskip -4.5cm
\caption{\rm\small In grey, the inclusion graph of the learning space $\FFF$ of Equation (\ref{lin_space_ex}).  Each oval surrounds an equivalence class $[K]$ (in grey) and a particular state (in black) of the projection $\FFF_{|\{a,d,f\}}$ of $Q$ on $Q'= \{a,d,f\}$, signaling a 1-1 mapping 
$\FFF_\sim\to \FFF_{|\{a,d,f\}}$ (cf.~Lemma \ref{proj def k struct =>}(ii)). Via the defining equation (\ref{def Kk}), the eight equivalence classes produce four children of $\FFF$, which are represented in the black rectangles of the figure. One of these children is 
the singleton set $\{\es\}$ (thus, a trivial child), and the others are learning spaces or partial learning spaces (cf.~Projection Theorems \ref{projectiontheorem1} and \ref{projectiontheorem2}).}
\label{projection_graph}
\end{figure}
 The inclusion graph of this learning space  is pictured by the grey parts of the diagram of Figure \ref{projection_graph}.
The sets marked in black in the eight ovals of the figure represents the states of the projection $\FFF_{|\{a,d,f\}}$  of $\FFF$ on the set $\{a,d,f\}$. It is clear that $\FFF_{|\{a,d,f\}}$ is a learning space\footnote{In fact, $\FFF_{|\{a,d,f\}} = 2^{\{a,d,f\}}$ in this particular case. This property does hold in general.}. Each of these ovals also surrounds the inclusion subgraph corresponding to an equivalence class of the 
partition~$\FFF_\sim$. 
This is consistent with  Lemma \ref{proj def k struct =>} (ii) according to which there is a 1-1 correspondence between $\FFF_\sim$ and $\FFF_{|\{a,d,f\}}$. In this example, the `learning space' property is hereditary in sense that not only  is $\FFF_{|\{a,d,f\}}$  a learning space, but also any child of $\FFF$   is a learning space or a partial learning space. Indeed, we have
 \begin{gather*}
\FFF_{[\{b,c,e\}]}= \FFF_{[\{a, b,c,e\}]} =  \{\{b\},\{c\},\{b,c\},\{b,c,e\}\},\\
\FFF_{[\{ b,c,d,e\}]}= \FFF_{[\{ b,c,d,e,f\}]} =  \FFF_{[\{a, b,c,d,e\}]} = \{\es, \{c\},\{c,e\}\},\\
 \FFF_{[\{ a,b,c,d,e,f,g\}]} = \{\es, \{c\},\{c,e\},\{c,e,g\}\}\\
  \FFF_{[\{ b,c,e,f\}]} =  \FFF_{[\{a, b,c,e,f\}]} = \{\es\}.
\end{gather*}
These four children are represented in the four black rectangles in Figure  \ref{projection_graph}. 

Theorem \ref{projectiontheorem1} shows that this hereditary property is general in the sense that the children of a partial learning space are always partial learning spaces. In the particular case of this example, just adding the set $\{\es\}$ to the child not containing it already, that is,  to the child $\FFF_{[\{b,c,e\}]}=\FFF_{[\{a, b,c,e\}]}$, would result in having all the children being learning spaces or trivial. This is {\bf not} generally true. The situation is clarified by Theorem \ref{projectiontheorem2}. 
\end{example}
\begin{remark}\label{rem project} The concept of projection for learning spaces is closely related to the concept bearing the same name for media introduced by \citet{cavag08}.  The Projection Theorems \ref{projectiontheorem1} and \ref{projectiontheorem2}, the main results of this chapter, could be derived via similar results concerning the projections of media \citep[cf.~Theorem 2.11.6 in][]{eppst07b}. This would be a detour, however. The route followed here is direct. 
\end{remark}

In the next two lemmas, we derive a few consequences of Definition \ref{def projection}. 

\begin{lemma}\label{proj def k struct =>} 
The following two statements are true for any partial knowledge structure  $(Q,\KKK)$.
\vspace{-1cm}
\begin{roster}
\item [{\rm (i)}] The projection $\KQ'$, with $Q'\subset Q$,  is a partial knowledge structure. If $(Q,\KKK)$ is a knowledge structure, then so is $\KQ'$.
\vspace{-.2cm}
\item[{\rm (ii)}]  The function $h:[K]\mapsto K\cap Q'$ is a well defined
bijection of $\KKK_\sim$ onto $\KQ'$.
\end{roster}
\end{lemma}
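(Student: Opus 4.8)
The plan is to derive both parts directly from the definitions, the only nontrivial ingredient being the distributivity of intersection over union. I would first record the immediate observation that every member of $\KQ'$ is by construction of the form $K\cap Q'$ with $K\in\KKK$, hence a subset of $Q'$, so that $\KQ'\SB 2^{Q'}$ without further argument.

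For part (i), I would compute $\cup\KQ'$. Using distributivity together with the defining property $\cup\KKK=Q$ of a partial knowledge structure, I get $\cup\KQ'=\cup_{K\in\KKK}(K\cap Q')=(\cup\KKK)\cap Q'=Q\cap Q'=Q'$, where the last equality uses $Q'\SB Q$. This exhibits $\KQ'$ as a family of subsets of $Q'$ whose union is $Q'$, that is, a partial knowledge structure on $Q'$. To upgrade this to a knowledge structure when $\KKK$ is one, I would only need to add that $\es\in\KKK$ forces $\es=\es\cap Q'\in\KQ'$; combined with $\cup\KQ'=Q'$ this supplies the two defining conditions of a knowledge structure.

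For part (ii), I would consider the ``intersect with $Q'$'' map $f:\KKK\to\KQ'$, $K\mapsto K\cap Q'$. It is surjective by the very definition (\ref{eq KQ'}) of $\KQ'$. The key point, read straight off (\ref{def sim eq}), is that $f(K)=f(L)$ holds precisely when $K\sim L$, so the fibers of $f$ are exactly the equivalence classes of $\sim$. Hence $f$ descends to the quotient $\KKK_\sim$ as the claimed map $h:[K]\mapsto K\cap Q'$, and I would verify the three standard points: $h$ is well defined, since any two representatives of $[K]$ have the same intersection with $Q'$; $h$ is onto, because $f$ already is; and $h$ is injective, since $h([K])=h([L])$ means $K\cap Q'=L\cap Q'$, i.e.\ $K\sim L$, i.e.\ $[K]=[L]$.

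There is no real obstacle here: the whole argument is a matter of unwinding Definitions \ref{def sim} and \ref{def projection}. The only points that require a moment's care are invoking $Q'\SB Q$ to conclude $Q\cap Q'=Q'$, and keeping straight that the sole extra hypothesis needed to pass from a partial knowledge structure to a knowledge structure is the presence of $\es$, which the intersection map preserves.
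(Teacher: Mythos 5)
Your part (ii) is correct and is essentially the paper's own argument: well-definedness and injectivity both come from reading (\ref{def sim eq}) in the two directions, and surjectivity is immediate from (\ref{eq KQ'}); packaging this as ``the fibers of $K\mapsto K\cap Q'$ are exactly the $\sim$-classes'' is only a cosmetic difference from what the paper does.

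Part (i), however, has a genuine gap: you have verified the wrong property. In this paper a partial knowledge structure is, by Definition \ref{ls def}, a family that \emph{contains its union as a member}: $\cup\FFF\in\FFF$. Your computation $\cup\KQ'=(\cup\KKK)\cap Q'=Q\cap Q'=Q'$ establishes only that the union of the projection equals $Q'$, not that $Q'$ is itself one of the sets of $\KQ'$, and the inference ``a family of subsets of $Q'$ whose union is $Q'$ is a partial knowledge structure on $Q'$'' is invalid in general: the family $\{\{a\},\{b\}\}$ has union $\{a,b\}$ but does not contain it, so it is not a partial knowledge structure. The missing step is exactly what the paper's one-line proof consists of: since $\KKK$ is a partial knowledge structure, the set $Q=\cup\KKK$ is a \emph{member} of $\KKK$, hence $Q'=Q\cap Q'\in\KQ'$. (Once this membership is in hand, your union computation becomes superfluous, since every set of $\KQ'$ is a subset of $Q'$ and $Q'$ itself is a state, forcing $\cup\KQ'=Q'$.) The same repair is needed for the upgrade to a knowledge structure: besides $\es=\es\cap Q'\in\KQ'$, which you do note, one needs $Q'\in\KQ'$, again obtained from $Q\in\KKK$. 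So the conclusion is true and the fix is one line, but as written your argument for (i) never verifies the defining membership condition.
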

\begin{proof} (i) Both statements stem from the observations that $\es\cap Q'=\es$ and $Q\cap Q' = Q'$.
 \vtl
 
 (ii) That $h$ is a well defined function is due  to (\ref{def sim eq}). It is clear that $h(\KKK_\sim) = \KQ'$ by the definitions of $h$ and $\KQ'$. Suppose that, for some $[K],[L]\in\KKK_{\sim}$, we have 
$h([K]) = K \cap Q' = h([L]) = L\cap Q' = X$. 
Whether or not $X = \es$, this  entails $K \sim L$  and so $[K]=[L]$.
\end{proof}

\newpage
\begin{lemma}\label{proj def k space =>} If $\KKK$ is a $\cup$-closed family, then the following three statements are true.\vspace{-.2cm}
\begin{roster}
\item[{\rm (i)}] $K\sim \cup[K]$ for any $K\in \KKK$.
\vspace{-.2cm}
\item[{\rm (ii)}]  $\KQ'$ is a $\cup$-closed family. If $\KKK$ is a knowledge space, so is $\KQ'$.
\vspace{-.2cm}
\item[{\rm (iii)}]  The children of $\KKK$ are also $\cup$-closed.
\end{roster}
\vspace{-.2cm}
In {\rm (ii)} and {\rm (iii)}, the converse implications are not true.
\vspace{-.2cm}
\end{lemma}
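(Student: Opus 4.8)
The plan is to establish the three parts in order, drawing out of (i) the structural fact that every $\sim$-class contains a greatest state belonging to $\KKK$; parts (ii) and (iii) then reduce to distributivity of $\cap$ and $\setminus$ over $\cup$. For (i), since all sets are finite and $\KKK$ is $\cup$-closed, the union $\cup[K]$ of the finitely many members of the class $[K]$ again lies in $\KKK$. To see $\cup[K]\sim K$, I compute
\[
\left(\cup[K]\right)\cap Q' \;=\; \bigcup_{L\in[K]}\left(L\cap Q'\right)\;=\; K\cap Q',
\]
the last equality holding because every $L\in[K]$ satisfies $L\cap Q'=K\cap Q'$ by the definition of $\sim$. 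Thus $\cup[K]\sim K$, and in fact $\cup[K]$ is the largest state in its own class.

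For (ii), given $W_1,W_2\in\KQ'$ I write $W_i=K_i\cap Q'$ with $K_i\in\KKK$ and use the identity $(K_1\cap Q')\cup(K_2\cap Q')=(K_1\cup K_2)\cap Q'$. Since $K_1\cup K_2\in\KKK$ by $\cup$-closure, this displays $W_1\cup W_2$ as a set of the form $K\cap Q'$, whence $W_1\cup W_2\in\KQ'$. The knowledge-space assertion then follows by combining this with Lemma \ref{proj def k struct =>}(i), which already supplies that $\KQ'$ is a knowledge structure whenever $\KKK$ is one. For (iii), fix a class $[K]$, abbreviate $c=\cap[K]$, and take $M_1,M_2\in\Kk$, say $M_i=L_i\setminus c$ with $L_i\sim K$. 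Distributivity of $\setminus$ over $\cup$ gives $M_1\cup M_2=(L_1\cup L_2)\setminus c$, and it only remains to check that $L_1\cup L_2$ stays in $[K]$: indeed $L_1\cup L_2\in\KKK$ by $\cup$-closure, and $(L_1\cup L_2)\cap Q'=(L_1\cap Q')\cup(L_2\cap Q')=K\cap Q'$, so $L_1\cup L_2\sim K$. Hence $M_1\cup M_2\in\Kk$.

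The failure of the converses in (ii) and (iii) I would settle with a single small witness: on $Q=\{a,b\}$ with $Q'=\{a\}$, the family $\KKK=\{\es,\{a\},\{b\}\}$ is a knowledge structure that is not $\cup$-closed, since $\{a\}\cup\{b\}=Q\notin\KKK$; yet its projection $\KQ'=\{\es,\{a\}\}$ and its two children $\{\es,\{b\}\}$ and $\{\es\}$ are all $\cup$-closed. I expect no genuine obstacle here, as each inclusion is a one-line set identity; the only points deserving care are the class-membership verification in (iii)—which is exactly the computation already performed in (i)—and checking that the proposed witness really is a knowledge structure with the claimed projection and children.
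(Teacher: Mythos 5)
Your proofs of (i), (ii) and (iii) are correct and follow the paper's own argument: (i) is the identical computation, and in (ii) and (iii) you verify closure under binary unions via distributivity of $\cap$ and $\setminus$ over $\cup$, where the paper treats an arbitrary subfamily $\HHH$ at once by passing to an associated family $\HHH'\subseteq\KKK$; since every family in this setting is finite, the two routes are interchangeable.

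The genuine gap is your witness for the failure of the converses. By Definition \ref{ls def}, a knowledge structure on $Q$ must contain $Q=\cup\KKK$ as a state, and a partial knowledge structure must contain $\cup\FFF$; your family $\KKK=\{\es,\{a\},\{b\}\}$ contains neither, since $\{a,b\}\notin\KKK$. So it is not a knowledge structure, and --- more importantly --- Definitions \ref{def sim} and \ref{def projection} introduce the relation $\sim_{Q'}$, the projection and the children only for partial knowledge structures, so for your family these objects are not even defined within the paper's framework. Moreover the defect cannot be repaired on a two-element domain: if a family of subsets of $\{a,b\}$ does contain $\{a,b\}$, then the union of any two of its members is either one of those two members or $\{a,b\}$ itself, so every partial knowledge structure on a two-element domain is automatically $\cup$-closed, and no counterexample exists there. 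A counterexample requires $|Q|\geq 3$, which is exactly what the paper's Example \ref{onehD} supplies: $\GGG=\{\es,\{a\},\{b\},\{c\},\{a,b\},\{a,c\},\{a,b,c\}\}$ with $Q'=\{c\}$ is a knowledge structure that is not $\cup$-closed (it lacks $\{b\}\cup\{c\}=\{b,c\}$), while its projection $\{\es,\{c\}\}$ and its two children $\{\es,\{a\},\{b\},\{a,b\}\}$ and $\{\es,\{a\},\{a,b\}\}$ are all $\cup$-closed.
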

\tl
For knowledge spaces, Lemma \ref{proj def k space =>} (ii) was obtained by \citet[][Theorem 1.16 on p.~25]{doign99} where the concept of a projection was referred to as a `substructure.' Their proof applies here. We include it for completeness.
\tl

\begin{proof}
(i) As $\cup[K]$ is the union of states of $\KKK$,  we get $\cup[K]\in\KKK$. We must have $K\cap Q' = (\cup[K])\cap Q'$ because 
$K\cap Q' = L\cap Q'$ for all $L\in [K]$; so $K\sim\cup[K]$.
\vtl

(ii) Since $\cup\KKK \in\KKK$ by hypothesis, $\KKK$ is a knowledge structure. Lemma \ref{proj def k struct =>}(i), implies that $\KQ'$ is a partial knowledge structure.  Any subfamily $\HHH\SB\KQ'$ is associated to the family 
$\HHH'=\{H'\in \KKK\st  H= H'\cap Q'\text{ for some }H \in \HHH\}$. As $\KKK$ is a partial knowledge space, we get $\cup \HHH'  \in\KKK$, yielding $Q'\cap( \cup \HHH')\in \KQ'$,  with
$$ 
Q' \cap \left (\cup \HHH ' \right)  = \cup_{H'\in\HHH' }(H' \cap Q') = \cup \HHH . 
$$
Thus $\KQ'$ is a partial knowledge space. This argument is valid for knowledge spaces.  
\vtl

(iii) Take $K\in\KKK$ arbitrarily. We must show that $\Kk$ is $\cup$-closed. If $\Kk = \{\es\}$, this is vacuously true. Otherwise, for any $\HHH\SB \Kk$ we define the associated family 
$$
\HHH'=\{H'\in\KKK\st H'\sim K, \, H'\setminus \cap[K] \in \HHH\}.
$$
So, $\HHH'\SB [K]$, which gives $L\cap Q'= K\cap Q'$ for any $L\in\HHH'$.  We get thus $\cup\HHH'\sim K$.
\vtl

Since $\KKK$ is $\cup$-closed, we have $\cup \HHH'\in\KKK$.   The $\cup$-closure of $\Kk$ follows from the string of equalities 
\begin{align*}
\cup\HHH = \cup_{H'\in\HHH'}(H'\setminus \cap[K] )
= \cup_{H'\in\HHH'}(H'\cap (\overline{\cap[K]})
= ( \cup_{H'\in\HHH'}H')\setminus \cap[K]
\end{align*}
which gives $\cup\HHH \in\Kk$ because  $K\sim\cup\HHH'\in\KKK$.

\vtl
Example~\ref{onehD} shows that the reverse implications in (ii) and (iii) do not hold.
\end{proof}
\newpage
\begin{example}\label{onehD} Consider the projection  of the knowledge structure
$$
\GGG = \big\{\, \es,\,\{a\},\,\{b\},\, \{c\},\, \{a,b\},\,
\{a,c\},\, \{a,b,c\} \big\},
$$
on the subset $\{c\}$. We have thus the two equivalence classes $[\{a,b\}]$ and $[\{a,b,c\}]$, with the projection   $\GGG_{|\{c\}} = \{\es,\{c\}\}$. 
The two $\{c\}$-children are
$\GGG_{[\es]}= \big\{\es,\{a\}, \{b\},\{a,b\}\big\}$ and $\GGG_{[\{c\}]} = \big\{\, \es,\, \{a\}, \{a,b\} \,\big\}$. 
Both  $\GGG_{[\es]}$ and $\GGG_{[\{c\}]}$ are well-graded and $\cup$-closed, and so is $\GGG_{|\{c\}}$, but $\GGG$ is not since $\{b,c\}$ is not a state. 
\end{example}

We omit the proof of the next result which is due to   \citet{cosyn05} .

\begin{lemma}\label{echan} A  knowledge structure $(Q,\KKK)$ is a   learning space if and only if it a well--graded  knowledge space.  
\end{lemma}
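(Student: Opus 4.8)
The plan is to prove the two implications separately, using Axioms [L1] and [L2] together with the preparatory Lemma~\ref{toprovewg}; throughout, recall that a knowledge structure contains $\es$, so $\es\in\KKK$ is available in either direction.

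For the direction ``learning space $\Rightarrow$ well--graded knowledge space'' I would first establish $\cup$-closure. Given $K,L\in\KKK$ with $L\neq\es$ (the case $L=\es$ being trivial), apply [L1] to the nested pair $\es\subset L$ to obtain a chain $\es=L_0\subset L_1\subset\cdots\subset L_n=L$ with $L_{i+1}=L_i+\{q_i\}\in\KKK$, and then prove by induction on $i$ that $K\cup L_i\in\KKK$. In the inductive step, if $q_i\in K$ then $K\cup L_{i+1}=K\cup L_i\in\KKK$; if $q_i\notin K$ then $L_i\subseteq K\cup L_i$ are states with $L_i+\{q_i\}\in\KKK$, so either $K\subseteq L_i$ and $K\cup L_{i+1}=L_{i+1}\in\KKK$ at once, or the inclusion $L_i\subset K\cup L_i$ is proper and [L2] gives $(K\cup L_i)\cup\{q_i\}=K\cup L_{i+1}\in\KKK$. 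Taking $i=n$ yields $K\cup L\in\KKK$. For well--gradedness, observe that [L1] applied to any nested pair $K\subset L$ already furnishes a strictly increasing single--step chain, which is in particular a tight path from $K$ to $L$; since $\KKK$ is now $\cup$-closed, Lemma~\ref{toprovewg} upgrades this to full well--gradedness.

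For the converse ``well--graded knowledge space $\Rightarrow$ learning space'' I would verify [L1] and [L2] in turn. Axiom [L2] is immediate from $\cup$-closure: if $K\subset L$ are states and $K+\{q\}\in\KKK$, then $L\cup\{q\}=L\cup(K+\{q\})$ is a union of two states, hence in $\KKK$. For [L1], given $K\subset L$ with $|L\setminus K|=n$, take a tight path $K=K_0,K_1,\ldots,K_n=L$ from well--gradedness and show it must be a monotone chain of single insertions. Writing $K_{i+1}=K_i\bigtriangleup\{q_i\}$, the telescoping identity $K_0\bigtriangleup K_n=\{q_0\}\bigtriangleup\cdots\bigtriangleup\{q_{n-1}\}$ together with $|K_0\bigtriangleup K_n|=n$ forces the $q_i$ to be pairwise distinct and to exhaust $L\setminus K$; as each such $q_i$ is absent from $K_0=K$, present in $K_n=L$, and toggled exactly once along the path, that single toggle is an insertion, giving $K_0\subset K_1\subset\cdots\subset K_n$ with $K_{i+1}=K_i+\{q_i\}$, which is precisely [L1].

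I expect the main obstacle to be the $\cup$-closure argument in the forward direction. The remaining three verifications are essentially routine ([L2] from $\cup$-closure, [L1] from the monotonicity of tight paths, and well--gradedness from Lemma~\ref{toprovewg}), whereas closure under union must be extracted from [L1] and [L2] by the chain--transport induction above, in which the bookkeeping of which case ([L2] with a proper inclusion, or a degenerate inclusion) applies at each step is the delicate point. A secondary subtlety is the symmetric--difference counting used to pin the tight path down as a monotone chain in the converse direction.
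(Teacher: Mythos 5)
Your proof is correct, but there is no in-paper proof to compare it with: the paper explicitly omits the proof of Lemma~\ref{echan}, attributing the result to Cosyn and Uzun (2008). Judged as a self-contained argument, yours is sound and fits the paper's toolkit well. In the forward direction, the chain-transport induction is the real content: starting the [L1] chain at $\es$ (legitimate, since a knowledge structure contains $\es$) and the three-way bookkeeping ($q_i\in K$; or $K\SB L_i$; or $L_i\subset K\cup L_i$ proper, which is exactly the hypothesis that [L2] needs) correctly yields $K\cup L\in\KKK$; pairwise closure suffices for the paper's notion of $\cup$-closure because all families here are finite. Your subsequent appeal to Lemma~\ref{toprovewg} is exactly how the paper itself converts single-step chains between nested states into full wellgradedness (compare the proof of Projection Theorem~\ref{projectiontheorem1}), so the forward direction closes cleanly. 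In the converse direction, [L2] from $\cup$-closure is immediate as you say, and your symmetric-difference counting does force a tight path between nested states to be a monotone chain of insertions: the identity $K\bitup L=\{q_0\}\bitup\cdots\bitup\{q_{n-1}\}$ together with $|K\bitup L|=n$ forces the $q_i$ to be distinct, hence each lies in $L\setminus K$ and is toggled exactly once, so each toggle must be an insertion. In short, your argument supplies, within the paper's own framework and using its preparatory Lemma~\ref{toprovewg}, precisely the proof that the paper delegates to the literature.
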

As indicated by the next lemma, the equivalence ceases to hold in the case of  partial spaces.
\begin{lemma}\label{partialechan} Any well-graded $\cup$-closed family is a partial
 learning space. The converse implication is false.
\end{lemma}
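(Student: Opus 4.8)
The plan is to verify the two axioms [L1] and [L2] separately for an arbitrary well-graded $\cup$-closed family $\FFF$ — using well-gradedness for the first and $\cup$-closure for the second — and then to refute the converse by exhibiting a partial learning space that is not $\cup$-closed.

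For [L1], take $K,L\in\FFF$ with $K\subset L$ and $|L\setminus K|=n$; if $n=0$ there is nothing to prove, so assume $K\neq L$. Since $K\subset L$ we have $K\bitup L=L\setminus K$, hence $|K\bitup L|=n$, and well-gradedness supplies a tight path $K_0=K,K_1,\ldots,K_n=L$ of states of $\FFF$ with $d(K_i,K_{i+1})=1$ for $0\le i\le n-1$. The crux — and the one step I expect to be the real obstacle — is to show that this tight path is automatically \emph{increasing}, i.e. that every step adds one item rather than removing one. I would argue this by a parity count: along the path each item $q$ is toggled some number $t_q$ of times, and the total number of toggles equals the number of steps, namely $n$. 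An item lies in $K\bitup L=L\setminus K$ precisely when its two endpoint memberships differ, i.e. precisely when $t_q$ is odd; thus $t_q\ge 1$ for each of the $n$ items of $L\setminus K$ and $t_q$ is even (hence $\ge 0$) for every other item. Because $\sum_q t_q=n=|L\setminus K|$, each item of $L\setminus K$ is toggled exactly once and no other item is ever toggled. As every item of $L\setminus K$ is absent from $K_0=K$, each such single toggle is an insertion, so $K_0\subset K_1\subset\cdots\subset K_n$ with $K_{i+1}=K_i+\{q_i\}\in\FFF$, which is exactly the chain required by [L1]. Note that this half uses only well-gradedness.

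For [L2], suppose $K\subset L$ are in $\FFF$ and $K+\{q\}\in\FFF$ for some item $q$ (so $q\notin K$). If $q\in L$ then $L\cup\{q\}=L\in\FFF$. If $q\notin L$, then since $K\SB L$ we have $L\cup(K\cup\{q\})=L\cup\{q\}$, and this set lies in $\FFF$ because $\FFF$ is $\cup$-closed and both $L$ and $K+\{q\}$ belong to $\FFF$. Either way $L\cup\{q\}\in\FFF$, establishing [L2]. This half uses only $\cup$-closure, so the two hypotheses are deployed one apiece.

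For the converse I would first observe, via Lemma \ref{toprovewg}, that a $\cup$-closed partial learning space is necessarily well-graded: [L1] already furnishes, for every pair $K\subset L$, an increasing unit-step chain, and such a chain satisfies $d(K_i,K_{i+1})=1$ and $d(K,L)=|L\setminus K|=n$, so it is a tight path from $K$ to $L$ and Lemma \ref{toprovewg} applies. Consequently any counterexample to the converse must fail $\cup$-closure, and it suffices to produce a partial learning space that is not $\cup$-closed. One such example is, on $Q=\{a,b,c\}$, the family $\FFF=\{\{a\},\{b\},\{a,c\},\{b,c\},\{a,b,c\}\}$. It contains $\cup\FFF=Q$, so it is a partial knowledge structure, and a direct check of the few comparable pairs shows that [L1] and [L2] hold; yet $\{a\}\cup\{b\}=\{a,b\}\notin\FFF$, so $\FFF$ is not $\cup$-closed (and, by the observation above, not well-graded either). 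This refutes the converse. The only delicate point in the whole argument is the monotonicity of the tight path in [L1]; once that is in hand, both axioms and the counterexample are routine.
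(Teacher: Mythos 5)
Your proof is correct and follows the same overall decomposition as the paper's: [L1] is extracted from well-gradedness, [L2] is exactly the paper's one-line argument (namely $(K+\{q\})\cup L = L\cup\{q\}\in\FFF$ by $\cup$-closure), and the converse is refuted by a counterexample. Two differences are worth recording. First, where the paper merely asserts that ``Axiom [L1] is a special case of the well-gradedness condition,'' you supply the justification that assertion glosses over: a tight path between nested states could a priori contain deletions, and your parity count (each item of $L\setminus K$ is toggled an odd number of times, the total number of toggles is $n=|L\setminus K|$, hence each such item is toggled exactly once and no other item is ever touched) is precisely what shows the path is monotone increasing. Second, your counterexample differs from the paper's: the paper (Example \ref{l-space(not=>)U-closed}) uses a union of two chains on seven items meeting only at their top, whereas you use a five-state family on three items, preceded by the structural observation (via Lemma \ref{toprovewg}) that any counterexample must fail $\cup$-closure. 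Your example checks out: it contains $\cup\FFF=\{a,b,c\}$, axioms [L1] and [L2] hold for all comparable pairs, and $\{a\}\cup\{b\}=\{a,b\}\notin\FFF$.

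One blemish: the closing parenthetical ``and, by the observation above, not well-graded either'' is a non sequitur. Your observation says that a $\cup$-closed partial learning space is well-graded; from ``partial learning space and not $\cup$-closed'' it yields no conclusion about well-gradedness. The claim happens to be true --- there is no tight path from $\{a\}$ to $\{b\}$, since neither $\es$ nor $\{a,b\}$ is a state --- but that requires a direct check, not your observation. Since refuting the converse only requires the failure of $\cup$-closure, this does not affect the validity of your argument.
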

\vtl

\begin{proof} Let $\KKK$ be a well-graded $\cup$-closed family. Axiom [L1] is a special case of the well-gradedness condition.  If $K\subset L$ for two sets $K$ and $L$ in $\KKK$ and $K+\{q\}$ is in $\KKK$,  then the set $(K+\{q\})\cup L = L\cup\{q\}$ is in $\KKK$ by $\cup$--closure, and so [L2] holds. The example below disproves the converse.
 \end{proof}
\begin{example}\label{l-space(not=>)U-closed} The family of sets
\begin{align*}
\LLL = \{\{a,b,c\},\{c,d,e\}, \{a,b,c,f\},&\{c,d,e,g\},\{a,b,c,f,d\},\{c,d,e,g,b\},\\
&\{a,b,c,f,d,e\},\{c,d,e,g,b,a\},\{a,b,c,d,e,f,g\}\}
\end{align*}
is a partial learning space since it is the union of the two chains
\begin{gather*}
\{a,b,c\} \subset\{a,b,c,f\}\subset\{a,b,c,f,d\}\subset
\{a,b,c,f,d,e\}\subset\{a,b,c,d,e,f,g\},\\
\{c,d,e\} \subset \{c,d,e,g\} \subset\{c,d,e,g,b\} \subset
\{c,d,e,g,b,a\} \subset\{a,b,c,d,e,f,g\}
\end{gather*}
with the only common set $\cup\LLL$. However, $\LLL$ is neither $\cup$-closed nor well-graded.
\end{example}

We state the first of our two projection theorems.

\newpage
 \begin{projectiontheorem}\label{projectiontheorem1}
Let $\KKK$ be a learning space (resp.~a well-graded $\cup$-closed family) on a domain $Q$  with $|Q|=|\cup\KKK|\geq 2$. The following two properties hold for any proper non empty subset $Q'$ of $Q$:
\vspace{-.2cm}
\begin{roster}
\item[{\rm (i)}] The projection $\KKK_{|Q'}$ of $\KKK$ on $Q'$ is a learning space (resp.~a well-graded $\cup$-closed family);\vspace{-.3cm}
\item[{\rm (ii)}] In either case, the  children of $\KKK$ are well-graded and  $\cup$-closed families.
\end{roster}
\end{projectiontheorem}

Note that we may have $\Kk = \{\es\}$ in (ii) (cf.~Example \ref{examp projection}).

\tl
\begin{proof}
(i) If $\KKK$ is a learning space, then $\KQ'$ is a knowledge structure by Lemma \ref{proj def k struct =>}(i). Since $\KKK$ is $\cup$-closed by Lemma \ref{echan}, so is $\Kk$ by Lemma   \ref{proj def k space =>}  (ii).
 It remains to show that $\Kk$ is well-graded.  (By Lemma \ref{echan} again, this will imply that $\KQ'$ is a learning space.)  We use use Lemma  \ref{toprovewg} for this purpose. Take any two states $K'\subset L'$ in $\KQ'$ with $d(K',L') = n$ for some positive integer $n$. 
By 
Lemma \ref{proj def k struct =>} (ii), we have thus $K' = K\cap Q'$ and $L' = L\cap Q'$ for some $K,L\in \KKK$. As $\KKK$ well-graded by Lemma \ref{echan},  there exists a tight path 
$
K_0=K, K_1, \ldots , K_m= L
$
from $K$ to $L$, with either $ K_j= K_{j-1} + \{p_j\}$ or  $K_{j-1}= K_{j} + \{p_j\}$ for some $p_j\in Q$ and $1\leq i\leq m$.  Let $j\in\{1,\ldots, m\}$ be the first index such that $p_j\in Q'$. We have then necessarily  $p_j\in Q'\cap L = L'$ and $ K_j= K_{j-1} + \{p_j\}$. This yields
\begin{gather*}
K' = K_0\cap Q'= K_1\cap Q'=\ldots=K_{j-1}\cap Q',\\
\noalign{and for $1\leq j \leq n$,}
K_j \cap Q' = (K_{j-1}\cap Q') + \{p\},
\end{gather*}
with $ p\in L'\setminus K'$. Defining $ K'_0 = K'$ and $ K'_1 = K_j\cap Q'$,
we have $d(K'_1, L') = n-1$, with $K'_1$ a state of $\KQ'$.
By induction, we conclude that $\KQ'$ a wellgraded $\cup$ closed knowledge structure. Since $\KQ'$ is $\cup$-closed, it must be a learning space by Lemma \ref{echan}.
\vtl

Suppose now that $\KKK$ is a well-graded $\cup$-closed family (rather than learning space). In such a case, there is no need to invoke Lemma \ref{echan} and the above argument can be used to prove that $\KQ'$ a wellgraded $\cup$ closed family. 
\tl

(ii) Take any child $\Kk$ of $\KKK$. By Lemma \ref{proj def k space =>}(iii), $\Kk$ is a $\cup$-closed family. We use Lemma~\ref{toprovewg} to prove that $\Kk$ is also well-graded. Take any two states  $M\subset L$ in $\Kk$. We have thus   $L = L'\setminus(\cap[K])$ and $M=M'\setminus(\cap[K])$ for some $L'$ and $M'$ in $[K]$, with 
\begin{equation}\label{capKSBL}
\cap[K]\SB L'\subset M'.
\end{equation}
Since $\KKK$ is well-graded, there is a tight path
\begin{equation}\label{tightpathchain}
L'_0 = L'\subset L'_1\subset\ldots\subset L'_n= M'
\end{equation}
with all its states in $[K]$. Indeed, $L'\subset L'_j\subset M'$ and $L'\cap Q'= M'\cap Q'$ imply $  L'\cap Q'=  L'_j\cap Q'= M'\cap Q'$ for any index $1\leq j\leq n-1$. We now define the sequence $L_j = L'_j\setminus\cap[K]$, $0\leq j \leq n$. It is clear that (\ref{capKSBL}) and (\ref{tightpathchain}) imply
$$
L_0 = L\subset L_1\subset\ldots\subset L_n= M,
$$
and it is easily verified that $L_0 = L, L_1,\ldots, L_n= M$ is a tight path from $L$ to $M$. Applying Lemma \ref{toprovewg}, we conclude that $\Kk$ is well-graded. 
\end{proof}

\begin{remark}\label{assinges} In Example \ref{examp projection}, we had a situation in which the non trivial children of a learning space were either themselves learning spaces, or would become so by the addition of the set~$\{\es\}$.  This can happen if and only if the subset $Q'$ of the domain defining the projection satisfies the condition spelled out in the next definition.
\end{remark}

 \begin{definition}\label{yielding} Suppose that $(Q,\KKK)$ is a partial knowledge structure, with $|Q|\geq 2$. A subset $Q'\subset Q$ is \emph{yielding} if for any  state $L$ of $\KKK$ that is minimal for inclusion in some equivalence class $[K]$, we have  $|L\setminus \cap[K] |\leq 1$.  We recall that $[K]$ is the equivalence class containing $K$ in the partition of $\KKK$  induced by $Q'$ (cf.~Definition \ref{def sim}). For any non trivial child $\Kk$ of $\KKK$, we call $\Kk^+=\Kk \cup \{\es\}$ a \emph{plus} child of $\KKK$.
 \end{definition}

\begin{projectiontheorem}\label{projectiontheorem2} Suppose that $(Q,\KKK)$ is a learning space  with $|Q|\geq 2$, and let $Q'$ be a proper non empty subset of $Q$. The two following conditions are then equivalent.
 
{\rm (i)} The set $Q'$ is yielding.

{\rm (ii)} All the plus children of $\KKK$ are learning space\footnote{Note that we may have $\{\es\}\in\Kk$, in which case $\Kk^+ = \Kk$ (cf.~Example \ref{examp projection}).}.\end{projectiontheorem}

(It is easily shown that any learning space has always at least on non trivial child.)
\vtl

\begin{proof}
(i) $\Rightarrow$ (ii). By Lemma \ref{proj def k space =>}(iii), we know that any non trivial child $\Kk$ is\linebreak $\cup$-closed. This implies that the associated plus child $\Kkp$ is a knowledge space. We use 
Lemma \ref{toprovewg} to prove that $\Kkp$ is also well--graded. Suppose that $L$ and $M$ are states of $\Kkp$, with
$\es \SB L\subset M$ and, say $d(L,M) = n$. 

{\sc Case 1.} Suppose that $L\neq \es$.  By the definition of $\Kkp$, we have $L = L'\setminus \cap[K]$ and $M = M'\setminus \cap[K]$ for some $L',M'\in [K]$. Since $L\subset M$,
 we must have $L'\subset M'$. 
Because $L'$ and $M'$ are sets of $[K]$, there exists in $[K]$  (by the Projection Theorem \ref {projectiontheorem1} (ii)) a tight path 
$
L'_0 = L' \subset L'_1\subset \ldots\subset L'_m = M'.  
$
We show that  this tight path defines a tight path 
\begin{equation}\label{wg path 2 in (ii)}
L_0 = L, L_1, \ldots, L_n = M  
\end{equation}
lying entirely in $\Kkp$ (actually, in $\Kk$). By definition of a tight path,
we have $L'_0 + \{p_1\} = L'_1$ for some $p_1\in Q\setminus Q'$. Defining $L_1 = L'_1 \setminus\cap[K] $, we get $L_0 + \{p_1\} =L_1$ and thus $d(L_1,M) = n-1$. Note that $L_1$ is in $\Kkp$ because $L'_1$ is is $[K]$.
The existence of the tight path (\ref{wg path 2 in (ii)}) follows by induction.

{\sc Case 1.} Suppose now that $L= \es$. In view of what we just proved, we only have to show that, for any non empty $M\in\Kkp$, there is a singleton set $\{q\}\in \Kkp$ with $q\in M$. By definition of $\Kkp$, we have $M = M'\setminus\cap[K] $ for some $M'\in [K]$.  Take a minimal state $N$ in $[K]$ such that $N\SB M'$ and so 
$N\setminus\cap[K] \SB M$. Since $Q'$ is yielding, we get $|N\setminus\cap[K]| \leq 1$. If $|N\setminus\cap[K]|=1$, then $N\setminus\cap[K]=\{q\}\SB M$ for some $q \in Q$ with $\{q\}\in\Kkp$. Suppose that $|N\setminus\cap[K]| = 0$. 
 Thus $N\setminus\cap[K] = \es$ and $N$ must be the only minimal set in $[K]$, which implies that $\cap[K]= N$. By the wellgradedness of $[K]$ established in the Projection Theorem \ref{projectiontheorem1}(ii), there exists some $p\in M$ such that $N+\{p\}\SB M$. We get thus 
 $$
 (N + \{p\})\setminus \cap [K] = (N + \{p\})\setminus N = \{p\}\SB M\quad\text{with}\quad \{p\}\in\Kkp.
 $$
 The tight path (\ref{wg path 2 in (ii)}) from $L$ to $M$ exists thus in both cases. Applying Lemma \ref{toprovewg}, we can assert that $\Kkp$ is well-graded. We have shown earlier that  $\Kkp$ is a knowledge space. Accordingly, the plus child $\Kkp$ is a learning space.
\tl

(ii) $\Rightarrow$ (i). If some equivalence class $[K]$ is a chain or is a single set\footnote{We saw in Example \ref{examp projection} that this is possible.} $\{K\} = [K]$, then  
$|K\setminus \cap[K]| = 0$. Otherwise $[K]$ contains more than one minimal state. Let $L$ be one of these minimal states. Thus $L\setminus\cap[K]$ is a minimal non empty set of $\Kkp$, which by hypothesis is a learning space. By the wellgradedness of $\Kkp$, there is a tight path from $\es$ to $L\setminus\cap[K]$. Because $L\setminus\cap[K]$ is non empty and minimal in $\Kk$, it must be a singleton. We get thus $|L\setminus\cap[K]=1$.
\end{proof}

\section*{Summary}
Performing an assessment in a large learning space $(Q,\KKK)$  may be impractical in view of memory limitation or for other reasons. In such a case, a two-step or an n-step procedure may be applied. On Step 1, a representative subset $Q'$ of items from the domain $Q$ is selected, and an assessment is performed on the projection learning space $\KKK_{|Q'}$ induced by~$Q'$ (cf.~Projection Theorem \ref{projectiontheorem1}(i)). The outcome of this assessment is some knowledge state $K\cap Q'$ of $\KKK_{|Q'}$
which corresponds (1-1) to equivalence class $[K]$ of the partition of $\KKK$ induced by~$Q'$  (cf.~Lemma \ref{proj def k struct =>} (ii)).   On Step 2, the child $\Kk$ is formed by removing all the common items in the states of $[K]$. The assessment can then be pursued on $\Kk$ of $\KKK$ which is a partial learning space  (cf.~Projection Theorem \ref{projectiontheorem1}(ii)). The outcome of Step 2 is a set $L\setminus\cap[K]$, where $L$ is a state in the learning space~$\KKK$. This 2-step procedure can be expanded into a n-step recursive algorithm if necessary. 

If the set $Q'$ is yielding, then any equivalence class $[K]$containing more than one set can be made into a learning space by a trivial transformation. This property is not critical for the 2-step procedure outlined above.

\section*{References}

\begin{roster}

\bibitem[Albert and Lukas(1999)]{albert:b99}
\hspace{-1cm}D.~Albert and J.~Lukas, editors.
\newblock \emph{Knowledge Spaces: Theories, Empirical Research, Applications}.
\newblock Lawrence Erlbaum Associates, Mahwah, NJ, 1999.

\bibitem[Bj{\"o}rner et~al.(1999)Bj{\"o}rner, Las~Vergnas, Sturmfels, White,
  and Ziegler]{bjorn99}
\hspace{-1cm}A.~Bj{\"o}rner, M.~Las~Vergnas, B.~Sturmfels, N.~White, and G.M. Ziegler.
\newblock \emph{{Oriented Matroids}}.
\newblock Cambridge University Press, Cambridge, London, and New Haven, second
  edition, 1999.

\bibitem[Cavagnaro(2008)]{cavag08}
\hspace{-1cm}D.R. Cavagnaro.
\newblock {Projection of a medium}.
\newblock To be published in the \emph{Journal of Mathematical Psychology},
  2008.

\bibitem[Cosyn and Uzun(2008)]{cosyn05}
\hspace{-1cm}E.~Cosyn and H.B. Uzun.
\newblock {Axioms for learning spaces}.
\newblock Accepted for publication in the \emph{Journal of Mathematical
  Psychology}, 2008.

\bibitem[Cosyn(2002)]{cosyn:02}
\hspace{-1cm}Eric Cosyn.
\newblock Coarsening a knowledge structure.
\newblock \emph{Journal of Mathematical Psychology}, 46:\penalty0 123--139,
  2002.

\bibitem[Doignon and Falmagne(1999)]{doign99}
\hspace{-1cm}J.-P. Doignon and J.-Cl. Falmagne.
\newblock \emph{{Knowledge Spaces}}.
\newblock Springer-Verlag, Berlin, Heidelberg, and New York, 1999.

\bibitem[Doignon and Falmagne(1985)]{doignon:85}
\hspace{-1cm}J.-P. Doignon and J.-Cl. Falmagne.
\newblock {Spaces for the Assessment of Knowledge}.
\newblock \emph{International Journal of Man-Machine Studies}, 23:\penalty0
  175--196, 1985.

\bibitem[Edelman and Jamison(1985)]{edelm85}
\hspace{-1cm}P.H. Edelman and R.~Jamison.
\newblock {The theory of convex geometries}.
\newblock \emph{Geometrica Dedicata}, 19:\penalty0 247--271, 1985.

\bibitem[Eppstein et~al.(2007)Eppstein, Falmagne, and S.]{eppst07b}
\hspace{-1cm}D.~Eppstein, J.-Cl. Falmagne, and Ovchinnikov S.
\newblock \emph{{Media Theory}}.
\newblock Springer-Verlag, Berlin, Heidelberg, and New York, 2008.

\bibitem[Falmagne and Doignon(1988{\natexlab{a}})]{falmagne:88a}
\hspace{-1cm}J.-Cl. Falmagne and J-P. Doignon.
\newblock {A class of stochastic procedures for the assessment of knowledge}.
\newblock \emph{British Journal of Mathematical and Statistical Psychology},
  41:\penalty0 1--23, 1988{\natexlab{a}}.

\bibitem[Falmagne and Doignon(1988{\natexlab{b}})]{falmagne:88b}
\hspace{-1cm}J.-Cl. Falmagne and J-P. Doignon.
\newblock {A Markovian procedure for assessing the state of a system}.
\newblock \emph{Journal of Mathematical Psychology}, 32:\penalty0 232--258,
  1988{\natexlab{b}}.

\bibitem[Falmagne and Ovchinnikov(2002)]{falma02}
\hspace{-1cm}J.-Cl. Falmagne and S.~Ovchinnikov.
\newblock {Media theory}.
\newblock \emph{Discrete Applied Mathematics}, 121:\penalty0 83--101, 2002.

\bibitem[Falmagne et~al.(2006)Falmagne, Cosyn, Doignon, and
  Thi{\'e}ry]{falma06}
\hspace{-1cm}J.-Cl. Falmagne, E.~Cosyn, J.-P. Doignon, and N.~Thi{\'e}ry.
\newblock {The assessment of knowledge, in theory and in practice}.
\newblock In B.~Ganter and L.~Kwuida, editors, \emph{Formal Concept Analysis,
  4th International Conference, ICFCA 2006, Dresden, Germany, February 13--17,
  2006}, Lecture Notes in Artificial Intelligence, pages 61--79.
  Springer-Verlag, Berlin, Heidelberg, and New York, 2006.

\bibitem[Hockemeyer(2001)]{hockemeyer:tr01a}
\hspace{-1cm}C.~Hockemeyer.
\newblock {Tools and Utilities for Knowledge Spaces}.
\newblock Unpublished technical report, Institut f{\"u}r Psychologie,
  Karl--Franzens--Universit{\"a}t Graz, Austria, 2001.

\bibitem[Nunnally and Bernstein(1994)]{nunna94}
\hspace{-1cm}J.~Nunnally and I.~Bernstein.
\newblock \emph{{Psychometric Theory}}.
\newblock MacGraw-Hill, New York, 1994.

\bibitem[Welsh(1995)]{welsh95}
\hspace{-1cm}D.J.A. Welsh.
\newblock {Matroids: Fundamental Concepts}.
\newblock In R.L. Graham, M.~Gr{\"o}tschel, and L.~Lov{\'a}sz, editors,
  \emph{Handbook of Combinatorics}, volume~1. The M.I.T. Press, Cambridge, MA,
  1995.

\end{roster}
%\bibliography{Projections_LS.bib}
%\bibliographystyle{plainnat}
%\bibliographystyle{apalike}
\end{document}